\theoremstyle{plain}
\newtheorem{theorem}{Theorem}[section]
\newtheorem{lemma}[theorem]{Lemma}
\newtheorem{corollary}[theorem]{Corollary}
\theoremstyle{remark}
\newtheorem*{remarks}{Remarks}
\newcommand{\TT}{{\mathbb T}}
\newcommand{\ZZ}{{\mathbb Z}}
\newcommand{\DD}{{\mathbb D}}
\newcommand{\cD}{{\mathcal D}}
\newcommand{\cH}{{\mathcal H}}
\DeclareMathOperator{\spn}{span}
\DeclareMathOperator{\supp}{supp}
\DeclareMathOperator{\hol}{Hol}
\begin{document}

\date{22 June 2020}
\title{Failure of approximation of odd functions by odd polynomials}

\author{Javad Mashreghi}
\address{D\'epartement de math\'ematiques et de statistique, Universit\'e Laval,
Qu\'ebec City (Qu\'ebec),  Canada G1V 0A6.}
\email{javad.mashreghi@mat.ulaval.ca}

\author{Pierre-Olivier Paris\'e}
\address{D\'epartement de math\'ematiques et de statistique, Universit\'e Laval,
Qu\'ebec City (Qu\'ebec),  Canada G1V 0A6.}
\email{pierre-olivier.parise.1@ulaval.ca}

\author{Thomas Ransford}
\address{D\'epartement de math\'ematiques et de statistique, Universit\'e Laval,
Qu\'ebec City (Qu\'ebec),  Canada G1V 0A6.}
\email{thomas.ransford@mat.ulaval.ca}

\thanks{JM supported by an NSERC Discovery Grant. 
POP supported by an NSERC Alexander-Graham-Bell Scholarship.
TR supported by grants from NSERC and the Canada Research Chairs program.}

\begin{abstract}
We construct a Hilbert holomorphic function space $H$ on the unit disk such that the polynomials
are dense in $H$, but the odd polynomials are not dense in the odd functions in $H$.
As a consequence, there exists a function $f$ in $H$ that lies outside the closed linear span of its
Taylor partial sums $s_n(f)$, so it cannot be approximated by any
triangular summability method applied to the $s_n(f)$.
We also show that there exists a function $f$ in $H$
that lies outside the closed linear span of its radial dilates $f_r, ~r<1$.
\end{abstract}

\subjclass[2000]{41A10, 46E20, 40J05}

\keywords{Hilbert space, polynomial, odd function, summability method}

\maketitle

\section{Introduction and statement of main results}\label{S:intro}

We denote by $\DD$ the open unit disk, and by $\hol(\DD)$ 
the Fr\'echet space of holomorphic functions on $\DD$,
equipped with the topology of uniform convergence on compact sets. 
A \emph{Hilbert holomorphic function space on $\DD$} 
is a Hilbert space $H$ that is a subset of $\hol(\DD)$,
such that the inclusion map $H\hookrightarrow\hol(\DD)$ is continuous.

There are many examples of such spaces,
including the Hardy space \cite{CR00,Ni19}, the Dirichlet space \cite{EKMR14}, 
the Bergman space \cite{HKZ00},
the local Dirichlet spaces \cite{RS91},
as well as the de Branges--Rovnyak spaces \cite{Sa94}.
Most of these spaces contain the polynomials
as a dense subspace.

Our main result is the following surprising theorem.
As usual, we say that a function $f$  is \emph{odd} if $f(-z)=-f(z)$.

\begin{theorem}\label{T:horrible}
There exists a Hilbert holomorphic function space $H$  on $\DD$ 
such that:
\begin{itemize}
\item $H$ contains the polynomials,
\item the polynomials are dense in $H$,
\item the odd polynomials are \emph{not} dense in the odd functions in $H$. 
\end{itemize}
Moreover, given any positive sequence $(\omega_n)_{n\ge0}$
such that $\sum_n 1/\omega_{n}<\infty$,
the space $H$ can be chosen so that $\|z^n\|_H\le 1+\omega_n$ for all $n\ge0$.
\end{theorem}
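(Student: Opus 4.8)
The plan is to build $H$ as the completion of the polynomials under a carefully chosen inner product, and to read off the failure of approximation from a single geometric defect. First I would reduce the third bullet to a concrete orthogonality statement. Since the polynomials are dense in $H$, the closure of the odd polynomials is exactly $M:=\overline{\spn}\{z^{2k+1}:k\ge0\}$, so ``odd polynomials not dense in the odd functions'' means $M\subsetneq H_{\mathrm{odd}}$, where $H_{\mathrm{odd}}=\{f\in H: f(-z)=-f(z)\}$. By Hilbert-space duality this is equivalent to the existence of a nonzero odd $g\in H$ with $g\perp z^{2k+1}$ for every $k$. Conceptually the obstruction is the flip $J\colon f(z)\mapsto f(-z)$: if $J$ extended to a bounded operator on $H$, then the odd part $\tfrac12(p-Jp)$ of a polynomial approximant of an odd $f$ would converge to $f$, forcing $M=H_{\mathrm{odd}}$. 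So the whole point is to manufacture a space on which $J$ is unbounded, i.e.\ on which the even and odd monomials fail to sit at a positive angle.

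Since $J=\operatorname{diag}((-1)^n)$ in the monomial ``coordinates'', unboundedness can only come from non-orthogonality that mixes the two parities. I would therefore define the inner product through a Gram matrix $G=(\langle z^m,z^n\rangle_H)_{m,n\ge0}$ that is a positive-definite, parity-mixing perturbation of the identity: the diagonal entries $\langle z^n,z^n\rangle_H$ stay close to $1$ (so that $\|z^n\|_H\le 1+\omega_n$), while the off-diagonal entries couple monomials of opposite parity along a connected chain, so that no finite block of degrees decouples. The couplings are pushed toward the boundary of positivity, the admissible size of the perturbation at degree $n$ being governed by $\omega_n$; it is precisely the hypothesis $\sum_n 1/\omega_n<\infty$ that leaves enough room to make the coupling strong enough to create a defect while keeping $G$ positive and nearly unit on the diagonal. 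The mechanism I am engineering is the following: there should exist normalized \emph{even} polynomials $u_N$ that tend to $0$ in $\hol(\DD)$ yet stay bounded below in $\|\cdot\|_H$, and which compensate the (non-$H$-convergent) Taylor sums $s_N(g)=\sum_{k\le N}b_k z^{2k+1}$ of a fixed odd $g(z)=\sum_k b_k z^{2k+1}$, so that $s_N(g)+u_N$ is $H$-Cauchy with $\hol(\DD)$-limit $g$, while $\langle s_N(g)+u_N,\,z^{2j+1}\rangle_H\to0$ for each $j$. In the completion this produces the required $g\in H_{\mathrm{odd}}$ orthogonal to every odd monomial but not to itself.

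It then remains to verify the structural requirements: positive-definiteness of $G$ (so that $H$ is genuinely a Hilbert space), continuity of the inclusion $H\hookrightarrow\hol(\DD)$ — equivalently, boundedness with the right growth of the coefficient functionals $f\mapsto\widehat f(n)=\langle f,h_n\rangle_H$, which holds because $G$ remains close to the identity — density of the polynomials (automatic, as $H$ is their completion), and the bound $\|z^n\|_H\le 1+\omega_n$.

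I expect the main obstacle to be the tension between two opposing demands. On one hand, to keep $H\subset\hol(\DD)$ with nearly orthonormal monomials, $G$ must be a small positive perturbation of the identity. On the other hand, a small positive perturbation tends to make the system \emph{more} orthogonal, and a purely positive, low-rank, or block-diagonal coupling provably fails to create any defect: writing $G$ on the coefficient space, one checks that if $g$ is odd and $\perp$ all odd monomials then the naive computation $\langle Gg,g\rangle$ collapses to $0$, forcing $\|g\|_H=0$. The phenomenon can survive only because $G$ fails to commute with $\hol(\DD)$-limits, so that $\langle g,z^{2j+1}\rangle_H=\lim_N\langle s_N(g)+u_N,z^{2j+1}\rangle_H$ is \emph{not} obtained by applying $G$ to the coefficient sequence of $g$. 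Getting the chained coupling to be simultaneously positive-definite, embeddable in $\hol(\DD)$, and strong enough for this limit to be nonzero and orthogonal to every odd monomial — with all magnitudes kept under the $\omega_n$ budget — is the crux, and is exactly where the summability $\sum_n 1/\omega_n<\infty$ is used.
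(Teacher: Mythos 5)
Your proposal correctly identifies the phenomenon that has to be engineered: the parity flip $f(z)\mapsto f(-z)$ must be unbounded on $H$, and this must be witnessed by even polynomials that tend to $0$ in $\hol(\DD)$ yet stay bounded below in $\|\cdot\|_H$, compensating the divergent Taylor partial sums of a fixed odd $g$. Your duality reduction is also sound (limits of odd polynomials in $H$ are odd, by continuity of $H\hookrightarrow\hol(\DD)$, so the defect is equivalent to a nonzero odd element orthogonal to all odd monomials). But the proposal stops exactly where the proof must begin: no inner product is ever written down, no positivity is verified, no estimate involving $\omega_n$ is carried out, and you yourself label the construction of the coupling ``the crux.'' A theorem asserting the existence of a pathological space is proved only by exhibiting the space; everything verifiable in your text is either a reformulation of the goal or a list of properties the absent construction should have. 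As it stands, this is a research plan, not a proof.

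Two specific points show the gap is structural, not just one of presentation. First, your guiding principle that the Gram matrix $G$ of the monomials be a near-identity perturbation (``diagonal entries stay close to $1$,'' ``nearly orthonormal monomials'') works against you. If $G$ were bounded with bounded inverse, the monomials would form a Riesz basis of $H$; then every $f\in H$ would satisfy $f=\sum_n\widehat{f}(n)z^n$ in $H$-norm (the expansion coefficients must equal the Taylor coefficients, by continuity of the embedding), and every odd function would automatically be an $H$-limit of odd polynomials --- no defect possible. Moreover, positivity plus unit diagonal caps every entry of $G$ at about $1$, so the ``chained coupling'' you want is severely constrained, and it is not at all clear the phenomenon survives; the paper does the opposite, letting $\|z^n\|_H$ grow like $\omega_n$ (note $\sum_n 1/\omega_n<\infty$ forces $\omega_n\to\infty$), which is precisely where the $\omega_n$ budget is spent. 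Second, defining $H$ as an abstract completion of the polynomials requires proving that the completed space still embeds \emph{injectively} into $\hol(\DD)$, and your own mechanism manufactures sequences of polynomials small in $\hol(\DD)$ but not in $H$ --- exactly the kind of sequence that threatens injectivity unless one shows no such sequence is $H$-Cauchy. The paper resolves both issues at once with the explicit device you would need to reinvent: a Markushevich basis $(x_n,y_n)$ of $\ell^2$ (Johnson's construction), with $x_{2n}=e_{2n}$, $x_{2n+1}=e_{2n+1}-a_ne_{2n}+b_ne_{2n-2}$, $y_{2n}=e_{2n}+a_ne_{2n+1}-b_{n+1}e_{2n+3}$, $y_{2n+1}=e_{2n+1}$, $a_n=1/\eta_n^2$, $b_n=1/(\eta_n\eta_{n-1})$; one then sets $H:=J(\ell^2)$ where $J(x)(z):=\sum_n\langle x,y_n\rangle\|y_n\|^{-1}z^n$, so injectivity is immediate from totality of $(y_n)$, density of polynomials from totality of $(x_n)$, and the defect is exhibited concretely: $f=J\bigl(\sum_j\eta_je_{2j+1}\bigr)$ is odd because $\eta_na_n-\eta_{n+1}b_{n+1}=0$, while $g=J\bigl(\eta_0^{-1}e_1+\sum_k\eta_ke_{2k}\bigr)$ is orthogonal to every odd monomial yet $\langle f,g\rangle_H=1$. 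Until your coupling is specified at this level of concreteness and its positivity, embedding, and norm bounds are checked, the theorem remains unproved.
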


We shall prove this result in \S\ref{S:proof}, and explore some variants in \S\ref{S:variations}.
For example can we replace the odd functions by the even functions?
Can we apply these ideas to other Hilbert spaces, for instance spaces arising in Fourier analysis?
In the rest of this section, we explore the implications of Theorem~\ref{T:horrible}.

In most spaces, density of polynomials is proved by a direct construction.
For example, in the Hardy space $H^2$,  every function $f$ 
can be approximated by the partial sums  $s_n(f)$ of its Taylor expansion. 
Clearly the $s_n(f)$ are polynomials,
and,  from the very definition of the $H^2$-norm, 
we have that $\|s_n(f)-f\|_{H^2}\to0$ as $n\to\infty$.
A similar argument works in the classical Bergman and Dirichlet spaces.

However, there are spaces $H$ where this simple procedure breaks down, 
even though the polynomials are dense. 
One such space is $H=\cD_\zeta$,
the local Dirichlet space at a point~$\zeta$ in the unit circle.
As  in the well-known case of the disk algebra,
there exists a function $f\in\cD_\zeta$ for which $\sup_n\|s_n(f)\|_{\cD_\zeta}=\infty$,
so in particular $s_n(f)$ does not converge to $f$ in norm.
Just as for the disk algebra, however, the Ces\`aro means
\[
\sigma_n(f):=\frac{s_0(f)+s_1(f)+\cdots+s_n(f)}{n+1}
\]
do always converge to $f$ in norm.
For details, we refer to \cite{MR19b}.

Even worse is the case when $H=\cH(b)$, a de Branges--Rovnyak space.
Then it can happen that, even though polynomials are dense, neither the Taylor partial sums $s_n(f)$,
nor their Ces\`aro means $\sigma_n(f)$ converge to $f$.
Worse still,  the radial dilates $f_r(z):=f(rz)$ may tend to infinity in norm as $r\to1^-$,
even though they clearly converge pointwise to $f$. For details, we refer to \cite{EFKMR16}.

There is however a general approximation procedure that works in any Hilbert holomorphic function
space $H$ in which the polynomials are dense.
Using the Gram--Schmidt process, we can find an orthonormal basis $(p_n)$ of $H$ consisting of polynomials.
If we define $T_n:H\to H$ by 
\[
T_n(f):=\sum_{k=1}^n \langle f,p_k\rangle p_k,
\]
then $(T_n)$ is a \emph{linear polynomial approximation scheme} for $H$,
namely a sequence of bounded linear self-maps of $H$ with the property that,
 for every $f\in H$, the sequence $(T_n(f))$ consists of polynomials converging to $f$ in the norm of $H$. 
It was shown in \cite{MR19a} that,
more generally, every Banach holomorphic function space $X$ on $\DD$
admits a linear polynomial approximation scheme $(T_n)$, provided that merely polynomials are dense  
and that $X$ has the bounded approximation property.
It was further shown that $T_n$ can always be chosen so that $\deg T_n(f)\le n$ for all $n$.
In view of this, it seems reasonable to ask if $T_n$ can be chosen to have the form
\begin{equation}\label{E:triangular}
T_n(f)=\sum_{k=0}^n c_{nk}s_k(f)
\end{equation}
for some triangular array of complex numbers $(c_{nk})_{0\le k\le n<\infty}$.
The following result answers this question in the negative, 
even in the special case of a Hilbert holomorphic function space.

\begin{corollary}\label{C:summability}
Let $H$ be as in Theorem~\ref{T:horrible}.
Then, despite the fact that polynomials are dense in $H$,
there exists $f\in H$ lying outside the closed linear span of $\{s_n(f):n\ge0\}$.
Hence there is no sequence of linear maps $T_n :H\to H$ of the form \eqref{E:triangular}
such that $\|T_n(f)-f\|_H\to0$.
\end{corollary}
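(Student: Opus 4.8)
The plan is to exploit the failure of density of odd polynomials furnished by Theorem~\ref{T:horrible}. The crucial preliminary observation I would make is that, for an \emph{odd} function $f\in H$, every Taylor partial sum $s_n(f)$ is itself an odd polynomial. Indeed, writing $f(z)=\sum_k a_k z^k$, oddness forces $a_k=0$ for every even $k$, so $s_n(f)=\sum_{k=0}^n a_k z^k$ involves only odd powers of $z$. Consequently every finite linear combination of the $s_n(f)$ is again an odd polynomial, and therefore the closed linear span $\overline{\spn}\{s_n(f):n\ge0\}$ is contained in the $H$-closure of the set of all odd polynomials.

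Next I would invoke the third bullet of Theorem~\ref{T:horrible}, namely that the odd polynomials are \emph{not} dense in the odd functions of $H$. This directly provides an odd function $f\in H$ that does not lie in the $H$-closure of the odd polynomials. Combining this with the preceding step gives $f\notin\overline{\spn}\{s_n(f):n\ge0\}$, which is exactly the first assertion of the corollary.

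Finally, for the summability statement I would argue by contradiction. Suppose there were maps $T_n$ of the form \eqref{E:triangular} satisfying $\|T_n(f)-f\|_H\to0$ for this particular $f$. Since $T_n(f)=\sum_{k=0}^n c_{nk}s_k(f)$ is a finite linear combination of the partial sums, each $T_n(f)$ belongs to $\spn\{s_k(f):k\ge0\}$; as a norm limit of elements of that subspace, $f$ would then lie in its closed linear span, contradicting what was just established.

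I do not anticipate any genuine obstacle at this stage: essentially all of the difficulty has already been absorbed into Theorem~\ref{T:horrible}. The only points requiring a moment's care are the elementary observation that oddness of $f$ propagates to its partial sums, and the standard fact that the norm limit of a sequence drawn from a linear subspace lies in the closure of that subspace.
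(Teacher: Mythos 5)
Your proposal is correct and follows exactly the paper's own argument: pick an odd $f\in H$ outside the closure of the odd polynomials (third bullet of Theorem~\ref{T:horrible}), note that each $s_n(f)$ is an odd polynomial so the closed span of $\{s_n(f)\}$ sits inside that closure, and conclude that no maps of the form \eqref{E:triangular} can norm-approximate $f$. The extra details you supply (vanishing of even Taylor coefficients, the limit-in-closed-subspace argument) are precisely the steps the paper leaves implicit.
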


\begin{proof}
Let $f\in H$ be an odd function not approximable by odd polynomials.
Since the partial sums $s_n(f)$ of its Taylor series are odd polynomials, 
their closed linear span does not contain $f$.
Evidently this precludes the possibility of approximating $f$
by polynomials of the form \eqref{E:triangular}.
\end{proof}

The formula \eqref{E:triangular} describes a so-called triangular summability method.
The conclusion of Corollary~\ref{C:summability} is that no such method applied to the
Taylor partial sums $s_n(f)$ will converge to $f$ in $H$. For background on summability methods,
we refer to \cite{Ha92}.

Exactly the same analysis applies to approximation of $f$ by functions of the form
\begin{equation}\label{E:infinite}
T_n(f):=\sum_{k=0}^\infty c_{nk}s_k(f),
\end{equation}
\emph{provided} that these series converge in $H$. 
This is where the last part of Theorem~\ref{T:horrible} comes in.
We have the following result.

\begin{corollary}\label{C:infinite}
Let $H$  and $(\omega_n)$ be as in Theorem~\ref{T:horrible}.
Suppose, in addition, that the sequence $(\omega_n)$ is chosen so that $\lim_{n\to\infty}\omega_n^{1/n}= 1$.
Then, despite the fact that polynomials are dense in $H$,
 there exists $f\in H$ such that:
\begin{itemize} 
\item $\sum_{k=0}^\infty c_{k}s_k(f)$ converges in $H$ whenever $\limsup_{k\to\infty}|c_k|^{1/k}<1$,
\item $f$ lies outside the closed linear span of the set of all such series.
\end{itemize}
\end{corollary}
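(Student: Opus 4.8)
The plan is to take $f$ to be exactly the odd function furnished by Theorem~\ref{T:horrible}, the one that is odd but not approximable in $H$ by odd polynomials, and to verify that for \emph{this} $f$ both bullet points hold. The second bullet comes almost for free from the observation already exploited in Corollary~\ref{C:summability}: since $f$ is odd, each Taylor partial sum $s_k(f)$ is an odd polynomial, so any convergent series $\sum_k c_k s_k(f)$ lies in $\overline{\spn}\{s_k(f):k\ge0\}$, which in turn sits inside the closure of the odd polynomials. As $f$ is odd but lies outside that closure, it lies outside the closed linear span of the set of all such series. The real work is therefore entirely in the first bullet, the convergence claim, and this is where the extra hypothesis $\lim_n\omega_n^{1/n}=1$ enters.

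For the convergence I would first control the growth of $\|s_k(f)\|_H$. Writing $f=\sum_j a_jz^j$, the triangle inequality gives
\[
\|s_k(f)\|_H\le\sum_{j=0}^k|a_j|\,\|z^j\|_H\le\sum_{j=0}^k|a_j|(1+\omega_j).
\]
Since $f$ is holomorphic on $\DD$, its radius of convergence is at least $1$, so $\limsup_j|a_j|^{1/j}\le1$. Because $\sum_j 1/\omega_j<\infty$ forces $\omega_j\to\infty$, the hypothesis $\omega_j^{1/j}\to1$ yields $\limsup_j(1+\omega_j)^{1/j}\le1$ (using $1+\omega_j\le2\omega_j$ for large $j$). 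Combining these, $\limsup_j\bigl(|a_j|(1+\omega_j)\bigr)^{1/j}\le1$, so for each $\epsilon>0$ the summands are $O_\epsilon((1+\epsilon)^j)$ and hence $\|s_k(f)\|_H=O_\epsilon((1+\epsilon)^k)$. Letting $\epsilon\to0$ gives the key estimate
\[
\limsup_{k\to\infty}\|s_k(f)\|_H^{1/k}\le1.
\]

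With this in hand, the convergence is a routine root-test argument. If $\limsup_k|c_k|^{1/k}=\alpha<1$, I would choose $a$ with $\alpha<a<1$ and $b>1$ such that $ab<1$ (possible since $\alpha<1$). Because $|c_k|^{1/k}<a$ and $\|s_k(f)\|_H^{1/k}<b$ for all large $k$, we get $|c_k|\,\|s_k(f)\|_H<(ab)^k$ eventually, whence $\sum_k|c_k|\,\|s_k(f)\|_H<\infty$; thus $\sum_k c_k s_k(f)$ converges absolutely, and therefore converges, in the Hilbert space $H$. I expect the only genuinely delicate point to be the bookkeeping of the three competing exponential rates — the coefficient decay $\alpha<1$, the Taylor-coefficient growth of exponent $\le1$, and the monomial-norm growth $(1+\omega_j)^{1/j}\to1$ — and in particular the recognition that the condition $\omega_n^{1/n}\to1$ is precisely what keeps the product rate strictly below $1$. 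Everything else reduces to the already-established fact that $f$ escapes the closure of the odd polynomials.
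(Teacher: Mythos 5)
Your proof is correct and takes essentially the same route as the paper's: the same choice of $f$ (the odd function outside the closure of the odd polynomials), the same closed-span argument for the second bullet, and the same growth estimate $\limsup_k\|s_k(f)\|_H^{1/k}\le 1$ combined with a root test for the first bullet. The only difference is that you spell out in full the step the paper compresses into the phrase ``elementary estimates give that $\|s_k(f)\|=O(R^k)$ for each $R>1$.''
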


\begin{proof}
Let $f$ be chosen as in Corollary~\ref{C:summability}.
Since
\[
\limsup_{n\to\infty}\|z^n\|_H^{1/n}\le\limsup_{n\to\infty}(1+\omega_n)^{1/n}\le1, 
\]
elementary estimates give that $\|s_k(f)\|=O(R^k)$ for each $R>1$,
and consequently  $\sum_{k\ge0}|c_k|\|s_k(f)\|_H<\infty$ whenever $\limsup_{k\to\infty}|c_k|^{1/k}<1$.
This establishes the first point, and the second one follows immediately from the choice of $f$ and the fact
every sum $\sum_{k=0}^\infty c_{k}s_k(f)$ belongs to the closed linear span of $\{s_k(f):k\ge0\}$.
\end{proof}

An important special case of the above is the so-called Abel summation method,
where one takes $c_k:=(1-r)r^k$ for $r<1$. Clearly we have $\lim_{k\to\infty}|c_k|^{1/k}=r<1$,
so Corollary~\ref{C:infinite} applies. A simple calculation gives that
\[
\sum_{k\ge0}(1-r)r^ks_k(f)=f_r,
\]
where $f_r(z):=f(rz)$, a radial dilate of $f$. We thus obtain the following corollary.

\begin{corollary}\label{C:dilates}
Let $H$  and $(\omega_n)$ be as in Theorem~\ref{T:horrible},
and suppose that the sequence $(\omega_n)$ is chosen so that $\lim_{n\to\infty}\omega_n^{1/n}= 1$.
Then, despite the fact that polynomials are dense in $H$, 
there exists $f\in H$ lying outside the closed linear span of its set of radial dilates $\{f_r:0<r<1\}$. 
\end{corollary}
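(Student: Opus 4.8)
The plan is to read off this corollary from Corollary~\ref{C:infinite} by identifying each radial dilate $f_r$ with one of the admissible Abel-type series in the Taylor partial sums. I would take $f\in H$ to be precisely the function produced by Corollary~\ref{C:infinite}, and then show that the closed linear span of $\{f_r:0<r<1\}$ sits inside the closed linear span of the set of all series $\sum_{k\ge0}c_ks_k(f)$ with $\limsup_{k\to\infty}|c_k|^{1/k}<1$, from which $f$ has already been excluded.

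First I would fix $r\in(0,1)$ and put $c_k:=(1-r)r^k$. Then $|c_k|^{1/k}=(1-r)^{1/k}r\to r<1$, so the admissibility hypothesis of Corollary~\ref{C:infinite} is satisfied and the series $\sum_{k\ge0}(1-r)r^ks_k(f)$ converges in $H$, say to $g_r$. Next I would identify $g_r$ with $f_r$ via the Abel summation identity
\[
\sum_{k\ge0}(1-r)r^ks_k(f)=f_r
\]
noted just before the statement: writing $f(z)=\sum_{n\ge0}a_nz^n$ and interchanging the order of summation yields $(1-r)\sum_{n\ge0}a_nz^n\sum_{k\ge n}r^k=\sum_{n\ge0}a_n(rz)^n=f_r(z)$. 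Because the inclusion $H\hookrightarrow\hol(\DD)$ is continuous, convergence in $H$ forces convergence in $\hol(\DD)$, so the $H$-limit $g_r$ must coincide with this pointwise sum; hence $f_r=g_r\in H$ and each $f_r$ is one of the admissible series of Corollary~\ref{C:infinite}.

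Finally, write $M$ for the closed linear span of the set of all admissible series. Each $f_r$ belongs to $M$, so the closed linear span of $\{f_r:0<r<1\}$ is contained in $M$. By the second conclusion of Corollary~\ref{C:infinite} we have $f\notin M$, and therefore $f$ lies outside the closed linear span of its radial dilates, which is the required assertion.

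Granting Corollary~\ref{C:infinite}, I expect no genuine obstacle: the corollary is a direct specialization, and the sole computational content is the classical Abel identity above. The one point meriting a moment's care is the interchange of summation, but this is harmless, since the convergence in $H$---guaranteed by the hypothesis $\lim_{n\to\infty}\omega_n^{1/n}=1$, which forces $\|s_k(f)\|_H=O(R^k)$ for every $R>1$---passes to uniform convergence on compact subsets of $\DD$ through the continuous inclusion, so the $H$-limit and the pointwise Abel sum necessarily agree.
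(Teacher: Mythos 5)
Your proposal is correct and follows essentially the same route as the paper: the paper also derives Corollary~\ref{C:dilates} directly from Corollary~\ref{C:infinite} by taking $c_k=(1-r)r^k$, noting $\lim_k|c_k|^{1/k}=r<1$, and using the Abel identity $\sum_{k\ge0}(1-r)r^ks_k(f)=f_r$. Your extra care in justifying the interchange of summation and in identifying the $H$-limit with the pointwise sum via the continuous inclusion $H\hookrightarrow\hol(\DD)$ is a correct filling-in of details the paper leaves implicit.
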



\section{Proof of Theorem~\ref{T:horrible}}\label{S:proof}

The proof of Theorem~\ref{T:horrible} is an adaptation of a construction outlined in 
\cite[Proposition~1.34]{HMVZ08}, where it is attributed to W.~B.~Johnson.
We begin with the following lemma.

\begin{lemma}\label{L:Mbasis}
Let $(e_n)_{n\ge0}$ be the standard unit vector basis of $\ell^2=\ell^2(\ZZ^+)$.
Let $(a_n)_{n\ge0}$ and $(b_n)_{n\ge1}$ be two sequences of complex numbers.
For $n\ge0$, define
\begin{align*}
&\begin{cases}
x_{2n}:=e_{2n},\\
x_{2n+1}:=e_{2n+1}-a_ne_{2n}+b_ne_{2n-2},
\end{cases}\\
&\begin{cases}
y_{2n}:=e_{2n}+a_ne_{2n+1}-b_{n+1}e_{2n+3},\\
y_{2n+1}:=e_{2n+1},
\end{cases}
\end{align*}
where, for convenience, we write $e_{-2}:=0$. Then $(x_n,y_n)_{n\ge0}$ is a M-basis for $\ell^2$, i.e.,
\begin{itemize}
\item it is a biorthogonal system: $\langle x_n,y_m\rangle=\delta_{nm}$ for all $n,m\ge0$,
\item each of the sequences $(x_n)_{n\ge0}$ and $(y_n)_{n\ge0}$ spans a dense subspace of $\ell^2$.
\end{itemize}
\end{lemma}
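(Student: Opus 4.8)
The plan is to verify the two defining properties of the M-basis separately, handling biorthogonality first and completeness second. Throughout I would work directly in coordinates relative to the orthonormal basis $(e_k)$, so that every inner product $\langle x_n,y_m\rangle$ collapses to a finite sum running over $\supp x_n\cap\supp y_m$. The first thing to record is the four supports, $\supp x_{2n}=\{2n\}$, $\supp x_{2n+1}=\{2n-2,2n,2n+1\}$, $\supp y_{2n}=\{2n,2n+1,2n+3\}$ and $\supp y_{2n+1}=\{2n+1\}$, together with the associated coordinates; from these alone the biorthogonality relations can be read off case by case.

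For biorthogonality, the point is that $x_{2n}=e_{2n}$ and $y_{2n+1}=e_{2n+1}$ are single basis vectors, so $\langle x_{2n},y_m\rangle$ and $\langle x_n,y_{2n+1}\rangle$ reduce immediately to a single coordinate and give exactly $\delta_{nm}$. The only pairings that can fail to vanish trivially are those of the form $\langle x_{2n+1},y_m\rangle$, and here the supports can meet only for $m\in\{2n-2,2n,2n+1\}$. The diagonal term $\langle x_{2n+1},y_{2n+1}\rangle$ equals $1$, while the two genuinely off-diagonal terms are engineered to cancel: in $\langle x_{2n+1},y_{2n}\rangle$ the coefficient $a_n$ occurs with opposite signs at the shared indices $2n$ and $2n+1$, and in $\langle x_{2n+1},y_{2n-2}\rangle$ the coefficient $b_n$ occurs with opposite signs at the shared indices $2n-2$ and $2n+1$, so in each case the two contributions annihilate one another. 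The only subtlety is the boundary convention $e_{-2}:=0$, which simply deletes the $b_0$-term from $x_1$ and is harmless.

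For completeness, the key observation is that the finite linear span of each sequence already contains every $e_k$, and hence the dense subspace of finitely supported vectors. For $(x_n)$ I would note that $x_{2n}=e_{2n}$ produces every even basis vector outright, and then recover the odd ones by the finite identity $e_{2n+1}=x_{2n+1}+a_ne_{2n}-b_ne_{2n-2}=x_{2n+1}+a_nx_{2n}-b_nx_{2n-2}$. Symmetrically, for $(y_n)$ the identity $y_{2n+1}=e_{2n+1}$ yields every odd basis vector, and $e_{2n}=y_{2n}-a_ne_{2n+1}+b_{n+1}e_{2n+3}=y_{2n}-a_ny_{2n+1}+b_{n+1}y_{2n+3}$ recovers the even ones. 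Thus $\spn\{x_n\}$ and $\spn\{y_n\}$ each contain all of the $e_k$ and are therefore dense.

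I do not anticipate a serious obstacle: both parts are elementary finite computations, and the whole construction is arranged so that the cross terms cancel and the basis vectors can be solved for explicitly. The one place to stay vigilant is the biorthogonality bookkeeping — making sure that no overlapping pair $(x_n,y_m)$ is overlooked (in particular that the three potential overlaps of each odd-indexed $x_{2n+1}$ are all accounted for) and that the small-index cases are consistent with the convention $e_{-2}=0$.
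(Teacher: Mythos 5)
Your proof is correct and takes essentially the same route as the paper's: the same case-by-case verification of biorthogonality driven by the supports, with the two cross cancellations in $\langle x_{2n+1},y_{2n}\rangle$ and $\langle x_{2n+1},y_{2n-2}\rangle$, and the same finite identities expressing each $e_k$ in the span of the $x_n$ (resp.\ the $y_n$) to get density of both spans. One caveat you share with the paper: with the standard sesquilinear inner product on $\ell^2$ the cancellations actually read $\overline{a_n}-a_n$ and $b_n-\overline{b_n}$, so the computation as written needs $a_n,b_n$ real (or a bilinear pairing) --- harmless in the end, since the application in the paper takes $a_n,b_n>0$.
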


\begin{proof}
First we prove biorthogonality. We need to check four cases:
\begin{align*}
\langle x_{2n},y_{2m}\rangle&=\delta_{nm},
&\langle x_{2n+1},y_{2m+1}\rangle&=\delta_{nm},\\
\langle x_{2n},y_{2m+1}\rangle&=0,
&\langle x_{2n+1},y_{2m}\rangle&=0.
\end{align*}
The first three are obviously true. For the fourth one, we calculate:
\begin{align*}
\langle x_{2n+1},y_{2m}\rangle
&=\bigl\langle e_{2n+1}-a_ne_{2n}+b_ne_{2n-2},\, e_{2m}+a_me_{2m+1}-b_{m+1}e_{2m+3}\bigr\rangle\\
&=a_m\delta_{2n+1,2m+1}-a_n\delta_{2n,2m}-b_{m+1}\delta_{2n+1,2m+3}+b_n\delta_{2n-2,2m}\\
&=0.
\end{align*}

To show that $(x_n)_{n\ge0}$ spans a dense subspace of $\ell^2$,
observe that, for each $n\ge0$, we have $e_{2n}=x_{2n}$ and 
$e_{2n+1}\in \spn\{x_{2n+1}, e_{2n},e_{2n-2}\}=\spn\{x_{2n+1},x_{2n},x_{2n-2}\}$ 
(where $x_{-2}:=0$).
Therefore the span of  $(x_n)_{n\ge0}$ contains the span of $(e_n)_{n\ge0}$,
so it is indeed dense in $\ell^2$.
Likewise, for each $n\ge0$, 
we have $e_{2n+1}=y_{2n+1}$ and  $e_{2n}\in\spn\{y_{2n},e_{2n+1},e_{2n+3}\}=\spn\{y_{2n},y_{2n+1},y_{2n+3}\}$, 
so $(y_n)_{n\ge0}$ spans also a dense subspace of $\ell^2$.
\end{proof}

\begin{proof}[Proof of Theorem~\ref{T:horrible}]
Let $(\eta_n)_{n\ge0}$ be a sequence such that $\eta_n>0$ and $\sum_n\eta_n^2<\infty$,
to be chosen later.
Set $a_n:=1/\eta_n^2~(n\ge0)$ and $b_n:=1/\eta_n\eta_{n-1}~(n\ge1)$, 
and let $(x_n,y_n)_{n\ge0}$ be the M-basis for $\ell^2$ constructed in Lemma~\ref{L:Mbasis}.
For $x\in \ell^2$, define
\[
J(x)(z):=\sum_{n\ge0}\frac{\langle x,y_n\rangle}{\|y_n\|} z^n \quad(z\in\DD).
\]
Since
\[
\sum_{n\ge0}\Bigl|\frac{\langle x,y_n\rangle}{\|y_n\|} z^n\Bigr|
\le \|x\|\sum_{n\ge0}|z|^n
=\frac{\|x\|}{1-|z|} \quad(z\in\DD),
\]
we see that $J(x)\in\hol(\DD)$ for each $x\in\ell^2$, and $J:\ell^2\to\hol(\DD)$ is a continuous linear map.
Moreover, since the sequence $(y_n)$ spans a dense subspace of $\ell^2$, it follows that $J$ is injective.
Define $H:=J(\ell^2)$, with the inner product $\langle\cdot,\cdot\rangle_H$ inherited from $\ell^2$. 
Then  the inclusion $H\hookrightarrow\hol(\DD)$ is continuous,
in other words,  $H$ is a Hilbert holomorphic function space on $\DD$.
Furthermore, since the sequence $(x_n)$ spans a dense subspace of $\ell^2$ 
and $J(\|y_n\|x_n)=z^n$ for each $n$, 
it follows that $H$ contains the polynomials and that polynomials are dense in $H$.

We now show that the odd polynomials are not dense in the odd functions of $H$.
To do this, it suffices to construct $f,g\in H$ such that $f$ is odd, $\langle z^{2n+1},g\rangle_H=0$ for all $n\ge0$,
yet $\langle f,g\rangle_H\ne0$. We do this as follows. Define $u,v\in\ell^2$ by
\[
u:=\sum_{j\ge0} \eta_j e_{2j+1}
\quad\text{and}\quad
v:=\frac{1}{\eta_0}e_1+\sum_{k\ge0} \eta_k e_{2k},
\]
and set $f:=J(u)$ and $g:=J(v)$.
For each $n\ge0$, the coefficient $\widehat{f}(2n)$ of $z^{2n}$ in the Taylor expansion of $f$ satisfies 
\begin{align*}
\|y_{2n}\|\widehat{f}(2n)
&=\langle u,y_{2n}\rangle\\
&=\sum_{j\ge0}\eta_j\langle e_{2j+1},y_{2n}\rangle\\
&=\sum_{j\ge0} \eta_j \langle e_{2j+1},~e_{2n}+a_ne_{2n+1}-b_{n+1}e_{2n+3}\rangle\\
&=\eta_n a_n-\eta_{n+1}b_{n+1}=0.
\end{align*}
Thus $\widehat{f}(2n)=0$ for all $n$, and $f$ is an odd function. Also, for each $n\ge0$, we have
\begin{align*}
\|y_{2n+1}\|^{-1}\langle z^{2n+1}, g\rangle_H
&=\langle x_{2n+1},v\rangle\\
&=\langle x_{2n+1},e_1/\eta_0\rangle
+\sum_{k\ge0}\eta_k\langle x_{2n+1}, e_{2k}\rangle\\
&=\langle e_{2n+1}-a_ne_{2n}+b_ne_{2n-2}, ~e_1/\eta_0\rangle\\
&\quad+ \sum_{k\ge0}\eta_k \langle e_{2n+1}-a_ne_{2n}+b_ne_{2n-2}, ~e_{2k}\rangle\\
&=
\begin{cases}
-\eta_n a_n+\eta_{n-1}b_n, &n\ge1,\\
1/\eta_0-\eta_0a_0, &n=0,
\end{cases}\\
&=0.
\end{align*}
Lastly, we have
\[
\langle f,g\rangle_H=\langle u,v\rangle 
=\Bigl\langle \sum_{j\ge0}\eta_je_{2j+1},~(1/\eta_0)e_1+\sum_{k\ge0}\eta_ke_{2k}\Bigr\rangle
=\frac{\eta_0}{\eta_0}=1\ne0.
\]
This completes the proof that odd polynomials are not dense in the odd functions in $H$.

Finally, we turn to the question of the estimation of $\|z^n\|_H$. Since $z^n=J(\|y_n\|x_n)$, we have
\[
\|z^n\|_H=\Big\|\|y_n\|x_n\Bigr\|=\|x_n\|\|y_n\|.
\]
Recalling the construction of $x_n$ and $y_n$, we deduce the following estimates:
\begin{align*}
\|z^{2n}\|_H&\le 1+|a_n|+|b_{n+1}|=1+\frac{1}{\eta_n^2}+\frac{1}{\eta_n\eta_{n+1}} \quad(n\ge0)\\
\|z^{2n+1}\|_H&\le 1+|a_n|+|b_n|=1+\frac{1}{\eta_n^2}+\frac{1}{\eta_n\eta_{n-1}} \quad(n\ge1)\\
\|z\|_H&\le 1+|a_0|=\frac{1}{\eta_0^2}.
\end{align*}
In order to have $\|z^n\|_H\le 1+ \omega_n$ for all $n$, it therefore suffices that
\[
\frac{3}{\eta_n^2}\le \omega_{2n},
\quad
\frac{1}{\eta_{n+1}^2}\le \omega_{2n},
\quad
\frac{3}{\eta_{n}^2}\le \omega_{2n+1},
\quad
\frac{1}{\eta_{n-1}^2}\le \omega_{2n+1}.
\]
To achieve this, we therefore define $(\eta_n)$ by
\[
\eta_n^2:=
\frac{3}{\omega_{2n}}+\frac{1}{\omega_{2n-2}}+\frac{3}{\omega_{2n+1}}+\frac{1}{\omega_{2n+3}},
\]
with the second term omitted in the case $n=0$. The condition that $\sum_n1/\omega_n<\infty$ then guarantees
that $\sum_n\eta_n^2<\infty$, as required. 
The proof is complete.
\end{proof}

\section{Variations on a theme}\label{S:variations}

Does Theorem~\ref{T:horrible} hold with even functions instead of odd ones? Certainly! 
In fact, the arithmetic structure of the odd/even numbers plays no real role.
As the following result shows, all that matters is that these are infinite subsets of $\ZZ^+$
with infinite complement.

In  what follows, we write $\widehat{f}(n)$ for the coefficient of $z^n$ in the Taylor expansion of $f$.
Also, $\supp\widehat{f}:=\{n\in\ZZ^+:\widehat{f}(n)\ne0\}$.

\begin{theorem}
Let  $I$ be any subset of $\ZZ^+$ such that both $I$ and $\ZZ^+\setminus I$ are infinite.
Then there exists a Hilbert holomorphic function space $H$ on $\DD$ in which the polynomials
are dense, but the set of polynomials $p$ with $\supp\widehat{p}\subset I$
is not dense in  $\{f\in H:\supp \widehat{f}\subset I\}$. 
\end{theorem}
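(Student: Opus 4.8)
The plan is to observe that the proof of Theorem~\ref{T:horrible} is entirely \emph{index-agnostic}: nowhere does the argument use the arithmetic of the even and odd integers, only the facts that the two sets are infinite and partition $\ZZ^+$. Accordingly, I would enumerate $I=\{\alpha_0<\alpha_1<\cdots\}$ and $\ZZ^+\setminus I=\{\beta_0<\beta_1<\cdots\}$ (both enumerations are infinite by hypothesis) and simply replay the construction, with $\alpha_n$ in the role of $2n+1$ and $\beta_n$ in the role of $2n$, so that the roles of ``odd'' and ``even'' are taken over by $I$ and its complement.

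First I would set up the analogue of Lemma~\ref{L:Mbasis}. Writing $e_{\beta_{-1}}:=0$ and letting $(a_n),(b_n)$ be scalar sequences, I define $y_{\alpha_n}:=e_{\alpha_n}$, $x_{\alpha_n}:=e_{\alpha_n}-a_ne_{\beta_n}+b_ne_{\beta_{n-1}}$, $x_{\beta_n}:=e_{\beta_n}$ and $y_{\beta_n}:=e_{\beta_n}+a_ne_{\alpha_n}-b_{n+1}e_{\alpha_{n+1}}$. The biorthogonality check reduces, exactly as before, to the single identity $\langle x_{\alpha_n},y_{\beta_m}\rangle=0$; expanding the inner product produces the four Kronecker terms $a_m\delta_{nm}-a_n\delta_{nm}-b_{m+1}\delta_{n,m+1}+b_n\delta_{n-1,m}$, which cancel in pairs (the first two when $n=m$, the last two when $n=m+1$). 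The point to stress is that this cancellation uses only that the enumerations $n\mapsto\alpha_n$ and $n\mapsto\beta_n$ are injective and that $I\cap(\ZZ^+\setminus I)=\emptyset$, so no adjacency of indices is needed. Density of each of $\spn(x_n)$ and $\spn(y_n)$ then follows because every $e_m$ is recovered: $e_{\beta_n}=x_{\beta_n}$ and $e_{\alpha_n}\in\spn\{x_{\alpha_n},x_{\beta_n},x_{\beta_{n-1}}\}$, and symmetrically for the $y$'s.

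Next I would define $J:\ell^2\to\hol(\DD)$ and $H:=J(\ell^2)$ verbatim as in the proof of Theorem~\ref{T:horrible}; the majorant $\sum_{n}|\langle x,y_n\rangle|\,|z|^n/\|y_n\|\le\|x\|/(1-|z|)$ and the injectivity of $J$ are unchanged, so $H$ is a Hilbert holomorphic function space containing the polynomials as a dense subspace. To defeat density of the $I$-supported polynomials, I fix any positive sequence $(\eta_n)$ with $\sum_n\eta_n^2<\infty$, set $a_n:=1/\eta_n^2$ and $b_n:=1/(\eta_n\eta_{n-1})$, and put $u:=\sum_{j\ge0}\eta_je_{\alpha_j}$ and $v:=\eta_0^{-1}e_{\alpha_0}+\sum_{k\ge0}\eta_ke_{\beta_k}$, with $f:=J(u)$ and $g:=J(v)$. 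The same three computations as before then give $\langle u,y_{\beta_n}\rangle=\eta_na_n-\eta_{n+1}b_{n+1}=0$ (so $\supp\widehat f\subset I$), $\langle x_{\alpha_n},v\rangle=0$ (so $\langle z^{\alpha_n},g\rangle_H=0$ for every $n$, i.e.\ $g$ annihilates all $I$-monomials), and $\langle f,g\rangle_H=\langle u,v\rangle=1\ne0$. Hence $f$ lies in $\{h\in H:\supp\widehat h\subset I\}$ but outside the closed span of $\{z^m:m\in I\}$, which is the required failure of density.

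Since the substance is the opening observation, the \emph{main obstacle} is simply to remain vigilant that no step of the original proof secretly exploited the progression structure of the evens and odds. Two places deserve a second look: the biorthogonality cancellation (which, as above, rests only on injectivity of the enumerations and disjointness of $I$ and its complement) and the telescoping identities $\eta_na_n=\eta_{n+1}b_{n+1}$ and $a_n\eta_n=b_n\eta_{n-1}$ underlying the vanishing of $\langle u,y_{\beta_n}\rangle$ and $\langle x_{\alpha_n},v\rangle$ (the $n=0$ instance of the latter being absorbed by the extra term $\eta_0^{-1}e_{\alpha_0}$ in $v$); both are pure two-term identities in the $\eta$'s and make no reference to the index labels. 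Note finally that the present theorem imposes no bound on $\|z^n\|_H$, so $(\eta_n)$ may be chosen freely (e.g.\ $\eta_n=2^{-n}$); if desired, the identity $\|z^n\|_H=\|x_n\|\,\|y_n\|$ from the original proof still holds and could be used to impose such a bound.
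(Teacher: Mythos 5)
Your proposal is correct and is essentially the paper's own proof: the paper realizes the identical idea by keeping the odd/even M-basis of Lemma~\ref{L:Mbasis} untouched and instead inserting a permutation $\sigma$ of $\ZZ^+$ (odds $\to I$, evens $\to \ZZ^+\setminus I$) into the exponents, $J(x)(z):=\sum_{n\ge0}\langle x,y_n\rangle\|y_n\|^{-1}z^{\sigma(n)}$, which is unitarily the same construction as your relabelling of the basis vectors by $\alpha_n$ and $\beta_n$. Your detailed verifications (biorthogonality, density, and the three computations with $u$, $v$) are exactly the original ones transported along this relabelling, so the two arguments coincide in substance.
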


\begin{proof}
Let $\sigma$ be a permutation of $\ZZ^+$ taking the odd integers to $I$ 
and the even ones to $\ZZ^+\setminus I$.
Then, repeating the construction in \S\ref{S:proof} with $J$ defined by
\[
J(x)(z):=\sum_{n\ge0}\frac{\langle x,y_n\rangle}{\|y_n\|} z^{\sigma(n)} \quad(z\in\DD),
\]
we obtain  a Hilbert holomorphic function space $H$ 
with the required properties.
\end{proof}

\begin{remarks}
(i) It is not hard to see that the conditions that $I$ and $\ZZ^+\setminus I$ 
be infinite are both necessary for this theorem to hold.

(ii) The estimate on monomials now becomes $\|z^{\sigma(n)}\|_H\le 1+\omega_n$.
\end{remarks}

By varying the definition of $J$, it is possible to embed $H$ in function spaces 
other than $\hol(\DD)$. There are many possibilities.
The following theorem illustrates the general idea in the context of Fourier series.

We write $\TT$ for the unit circle, $L^2(\TT)$ for the usual space of square-integrable
functions on $\TT$, and $H^2$ for the Hardy space, now considered as a closed subspace of $L^2(\TT)$.

\begin{theorem}\label{T:Fourier}
There exists a Hilbert space $H$ continuously embedded in $L^2(\TT)$ such that
\begin{itemize}
\item $H$ contains the trigonometric polynomials;
\item the trigonometric polynomials are dense in $H$;
\item the holomorphic polynomials are \emph{not} dense in $H\cap H^2$.
\end{itemize}
\end{theorem}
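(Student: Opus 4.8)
The plan is to repeat the construction of Theorem~\ref{T:horrible}, but to embed $\ell^2$ into $L^2(\TT)$ rather than into $\hol(\DD)$, via a bijection that routes the odd indices to the nonnegative frequencies and the even indices to the negative ones. Explicitly, I would take $\sigma\colon\ZZ^+\to\ZZ$ with $\sigma(2k+1):=k$ and $\sigma(2k):=-(k+1)$ for $k\ge0$. Under this identification the holomorphic polynomials inherit the role of the odd polynomials, and $H\cap H^2$ that of the odd functions: a function $J(x)$ will lie in $H^2$ exactly when its negative Fourier coefficients vanish, and since $\sigma$ carries the even indices to the negative frequencies, this translates into the very condition $\langle x,y_{2k}\rangle=0$ that encoded oddness before.

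The one genuinely new feature is convergence. In $\hol(\DD)$ the geometric decay of $z^n$ on compact subsets guarantees convergence of $J(x)$ for free, whereas on $\TT$ every character has modulus one, so I must instead force the coefficients to be square-summable. I would therefore fix nonzero weights $(w_n)_{n\ge0}$ with $\sum_n|w_n|^2<\infty$ and define
\[
J(x)(\theta):=\sum_{n\ge0}\frac{\langle x,y_n\rangle}{\|y_n\|}\,w_n\,e^{i\sigma(n)\theta}
\quad(x\in\ell^2).
\]
Since $\{e^{i\sigma(n)\theta}\}_{n\ge0}$ is an orthonormal basis of $L^2(\TT)$, Parseval combined with $|\langle x,y_n\rangle|\le\|x\|\,\|y_n\|$ gives $\|J(x)\|_{L^2}^2\le\|x\|^2\sum_n|w_n|^2$, so $J\colon\ell^2\to L^2(\TT)$ is bounded; and $J$ is injective because $(y_n)$ spans a dense subspace and each $w_n\ne0$. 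Setting $H:=J(\ell^2)$ with the inner product inherited from $\ell^2$ then produces a Hilbert space continuously embedded in $L^2(\TT)$. From $J\bigl((\|y_n\|/w_n)x_n\bigr)=e^{i\sigma(n)\theta}$ and the biorthogonality of the M-basis, every character $e^{im\theta}$ $(m\in\ZZ)$ lies in $H$, so $H$ contains the trigonometric polynomials, and their density follows from that of $\spn(x_n)$ in $\ell^2$.

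For the decisive third point I would simply recycle the vectors $u,v\in\ell^2$ and the functions $f:=J(u)$, $g:=J(v)$ from the proof of Theorem~\ref{T:horrible}. The key identities $\langle u,y_{2n}\rangle=0$, $\langle x_{2n+1},v\rangle=0$ and $\langle u,v\rangle=1$ are computed entirely inside $\ell^2$ and so survive the introduction of the weights unchanged. The first says $f\in H\cap H^2$; the second, together with $e^{in\theta}=J\bigl((\|y_{2n+1}\|/w_{2n+1})x_{2n+1}\bigr)$ for $n\ge0$, says that $g$ is orthogonal in $H$ to every holomorphic polynomial; and $\langle f,g\rangle_H=\langle u,v\rangle=1\ne0$ then shows that $f$ does not lie in the closed span of the holomorphic polynomials. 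I expect the only point requiring real care to be exactly this compatibility check — confirming that weighting the series to secure $L^2$-convergence leaves the biorthogonal structure and the two witness vectors $f,g$ intact — after which the conclusion is immediate.
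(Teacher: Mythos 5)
Your proof is correct and follows essentially the same route as the paper: the paper also reindexes via a bijection $\sigma\colon\ZZ^+\to\ZZ$ sending odd indices to $\ZZ^+$ and even indices to the negative frequencies, and inserts decaying weights (specifically $w_n=2^{-n}$, a special case of your square-summable $(w_n)$) precisely to secure $L^2(\TT)$-convergence. Your verification that the weights leave the biorthogonal structure and the witness vectors $u,v$ intact is exactly the content the paper leaves implicit in the phrase ``repeating the construction in \S\ref{S:proof}''.
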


\begin{proof}
Let $\sigma$ be a bijection of $\ZZ^+$ onto $\ZZ$ mapping the odd integers onto $\ZZ^+$
and the even integers onto $\ZZ\setminus\ZZ^+$. 
Then, repeating the construction in \S\ref{S:proof} with $J$ defined by
\[
J(x)(e^{it}):=\sum_{n\ge0}\frac{\langle x,y_n\rangle}{\|y_n\|} \frac{e^{i\sigma(n)t}}{2^n} \quad(e^{it}\in\TT),
\]
we obtain a Hilbert space $H$ continuously embedded in $L^2(\TT)$ 
with the required properties.
\end{proof}

This theorem has the following consequence.
Here $\widehat{f}(k)$ denotes the $k$-th Fourier coefficient of $f$.

\begin{corollary}
Let $H$ be as in Theorem~\ref{T:Fourier}.
Then, despite the fact that trigonometric polynomials are dense in $H$,
there exists $f\in H$ lying outside the closed linear span of
the functions $s_n(f):=\sum_{k=-n}^n \widehat{f}(k)\exp(ikt)$.
\end{corollary}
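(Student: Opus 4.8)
The plan is to imitate the proof of Corollary~\ref{C:summability}, transporting the obstruction supplied by Theorem~\ref{T:Fourier} into a statement about symmetric Fourier partial sums. Recall that $H^2$, viewed as a closed subspace of $L^2(\TT)$, consists precisely of those functions whose Fourier coefficients are supported on $\ZZ^+$. Theorem~\ref{T:Fourier} furnishes a function $f\in H\cap H^2$ lying outside the $H$-closure of the holomorphic polynomials, and this $f$ is the function I would take.

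The key observation is that, because $f\in H^2$, we have $\widehat{f}(k)=0$ for every $k<0$. Consequently the symmetric partial sum $s_n(f)=\sum_{k=-n}^n\widehat{f}(k)e^{ikt}$ collapses to $\sum_{k=0}^n\widehat{f}(k)e^{ikt}$, which is a holomorphic polynomial. In particular each $s_n(f)$ lies in $H$ (since $H$ contains the trigonometric polynomials), and the closed linear span in $H$ of the set $\{s_n(f):n\ge0\}$ is therefore contained in the $H$-closure of the holomorphic polynomials. As $f$ was chosen to lie outside the latter, it certainly lies outside the former, which is exactly the asserted conclusion.

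I do not expect a genuine obstacle here: the entire difficulty is packaged into Theorem~\ref{T:Fourier}, and what remains is the elementary bookkeeping that a holomorphic function has one-sided Fourier support, so that its two-sided partial sums are automatically one-sided. The one point I would state carefully is that all spans and closures are to be taken in the norm of $H$ rather than that of $L^2(\TT)$; since the inclusion $H\hookrightarrow L^2(\TT)$ is continuous but need not be isometric, it is the $H$-closure that is relevant, and it is precisely the $H$-closure of the holomorphic polynomials that Theorem~\ref{T:Fourier} controls.
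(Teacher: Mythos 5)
Your proposal is correct and follows essentially the same route as the paper: choose $f\in H\cap H^2$ outside the $H$-closure of the holomorphic polynomials via Theorem~\ref{T:Fourier}, note that $f\in H^2$ forces $\widehat{f}(k)=0$ for $k<0$ so each $s_n(f)$ is a holomorphic polynomial, and conclude that the closed linear span of $\{s_n(f)\}$ in $H$ cannot contain $f$. Your explicit remark that all closures are taken in the norm of $H$ (not $L^2(\TT)$) is a point the paper leaves implicit, but it is the same argument.
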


\begin{proof}
Let $f$ be a function in $H\cap H^2$ not approximable in $H$ by holomorphic polynomials.
Since $f\in H^2$, each of the functions $s_n(f)$ is a holomorphic polynomial,
and so their closed linear span does not contain $f$.
\end{proof}

\bibliographystyle{plain}
\bibliography{biblist}

\begin{thebibliography}{10}

\bibitem{CR00}
J.~A. Cima and W.~T. Ross.
\newblock {\em The backward shift on the {H}ardy space}, volume~79 of {\em
  Mathematical Surveys and Monographs}.
\newblock American Mathematical Society, Providence, RI, 2000.

\bibitem{EFKMR16}
O.~El-Fallah, E.~Fricain, K.~Kellay, J.~Mashreghi, and T.~Ransford.
\newblock Constructive approximation in de {B}ranges-{R}ovnyak spaces.
\newblock {\em Constr. Approx.}, 44(2):269--281, 2016.

\bibitem{EKMR14}
O.~El-Fallah, K.~Kellay, J.~Mashreghi, and T.~Ransford.
\newblock {\em A primer on the {D}irichlet space}, volume 203 of {\em Cambridge
  Tracts in Mathematics}.
\newblock Cambridge University Press, Cambridge, 2014.

\bibitem{HMVZ08}
P.~H\'{a}jek, V.~Montesinos~Santaluc\'{\i}a, J.~Vanderwerff, and V.~Zizler.
\newblock {\em Biorthogonal systems in {B}anach spaces}, volume~26 of {\em CMS
  Books in Mathematics/Ouvrages de Math\'{e}matiques de la SMC}.
\newblock Springer, New York, 2008.

\bibitem{Ha92}
G.~H. Hardy.
\newblock {\em Divergent series}.
\newblock \'{E}ditions Jacques Gabay, Sceaux, 1992.
\newblock With a preface by J. E. Littlewood and a note by L. S. Bosanquet,
  Reprint of the revised (1963) edition.

\bibitem{HKZ00}
H.~Hedenmalm, B.~Korenblum, and K.~Zhu.
\newblock {\em Theory of {B}ergman spaces}, volume 199 of {\em Graduate Texts
  in Mathematics}.
\newblock Springer-Verlag, New York, 2000.

\bibitem{MR19b}
J.~Mashreghi and T.~Ransford.
\newblock Hadamard multipliers on weighted {D}irichlet spaces.
\newblock {\em Integral Equations Operator Theory}, 91(6):Paper No. 52, 13,
  2019.

\bibitem{MR19a}
J.~Mashreghi and T.~Ransford.
\newblock Linear polynomial approximation schemes in {B}anach holomorphic
  function spaces.
\newblock {\em Anal. Math. Phys.}, 9(2):899--905, 2019.

\bibitem{Ni19}
Nikola\"{\i} Nikolski.
\newblock {\em Hardy spaces}, volume 179 of {\em Cambridge Studies in Advanced
  Mathematics}.
\newblock Cambridge University Press, Cambridge, french edition, 2019.

\bibitem{RS91}
S.~Richter and C.~Sundberg.
\newblock A formula for the local {D}irichlet integral.
\newblock {\em Michigan Math. J.}, 38(3):355--379, 1991.

\bibitem{Sa94}
D.~Sarason.
\newblock {\em Sub-{H}ardy {H}ilbert spaces in the unit disk}, volume~10 of
  {\em University of Arkansas Lecture Notes in the Mathematical Sciences}.
\newblock John Wiley \& Sons, Inc., New York, 1994.
\newblock A Wiley-Interscience Publication.

\end{thebibliography}

\end{document}